\newtheorem{theorem}{Theorem}[section]
\newtheorem{corollary}[theorem]{Corollary}
\newtheorem{prop}[theorem]{Proposition}
\newtheorem{lemma}[theorem]{Lemma}
\theoremstyle{remark}
\theoremstyle{definition}
\newtheorem{defn}[theorem]{Definition}
\newtheorem{example}[theorem]{Example}
\numberwithin{equation}{section}
\numberwithin{theorem}{section}
\newcommand{\R}{{\mathbb R}}
\providecommand{\abs}[1]{\lvert#1\rvert}
\providecommand{\norm}[1]{\lVert#1\rVert}
\newcommand{\sumin}{\sum_{i=1}^{n}}
\newcommand{\tbar}{\overline{t}}
\newcommand{\ttbar}{\overline{t^2}}
\newcommand{\xbar}{\overline{x}}
\newcommand{\ybar}{\overline{y}}
\newcommand{\xxbar}{\overline{x^2}}
\newcommand{\yybar}{\overline{y^2}}
\newcommand{\xybar}{\overline{xy}}
\newcommand{\var}{{\rm var}}
\newcommand{\cov}{{\rm cov}}
\begin{document}
\thanks{University of the Fraser Valley student Emily Ell implemented the algorithms
in Maple and produced one of the examples.  Emily was funded by a Work/study grant
from this institution.}
\subjclass{Primary 62J05. Secondary 26B5.}
\keywords{least squares, linear regression, completing the square, variance, covariance}
\date{Preprint November 9, 2020.}
\title{Variations on least squares}
\author{Erik Talvila}
\address{Department of Mathematics \& Statistics\\
University of the Fraser Valley\\
Abbotsford, BC Canada V2S 7M8}
\email{Erik.Talvila@ufv.ca}

\begin{abstract}
Three methods of least squares are examined for fitting a line to points in the plane.  Two well known
methods are to minimize sums of squares of vertical or horizontal distances to the line.  Less known is
to minimize sums of squares of distances to the line.  Concise proofs are given for each method using a
combination of the first derivative test for functions of two variables and completing the square.  The
three methods are compared and the distances to the line method appears to be favourable in most 
circumstances.  They generally draw different regression lines.  The method of vertical 
displacements typically gives a slope of too small magnitude while the method of horizontal displacements
typically gives a slope of too large magnitude.  An inequality involving the three slopes is proved.  Rotating all the data points in the same way with
these two methods does not result in the regression line being rotated the same way.  However, the distance
to the line method is invariant under rotations.
\end{abstract}

\maketitle

\section{Introduction}\label{sectionintroduction}
If you had a collection of ordered pairs in the plane and you thought there might
be something approaching a linear relation between them you could use the method
of least squares for linear regression.  Take a look at Figure~\ref{figurethreemethods}.
\begin{figure}[h]
{\includegraphics
[scale=.3]
{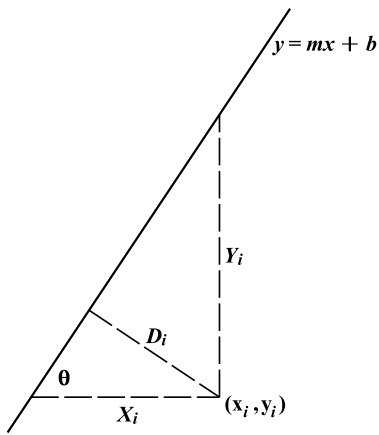}}
\caption{Regression line $y=mx+b$  and data point $(x_i,y_i)$}
\label{figurethreemethods}
\end{figure}
  This shows just
one point $(x_i,y_i)$ of a collection of $n$ points.  The regression line is written $y=mx+b$.
Three measures of how close $(x_i,y_i)$ is to the regression line are the vertical
distance $Y_i$, the horizontal distance $X_i$ and the distance to the line $D_i$.
In the $Y$ method of least squares, one sums up the squares $Y_i^2$ and then chooses
the real numbers $m$ and $b$ so as to minimize this sum.
Similarly, in the $X$ method
with sums of $X_i^2$.  However, in this case it is easiest to write the regression line
as $x=\mu y +\beta$ and then the result follows from the $Y$ method by interchanging
the $x$ and $y$ coordinates.  With the $D$ method one uses sums of squares $D_i^2$.  Here
there is symmetry between the $x$ and $y$ coordinates and it
is more convenient to write the line as $x\sin\theta-y\cos\theta=c$ where $\theta$
is the angle the line makes with the horizontal.  

The
$Y$ method is very well known.  Gauss had worked it out by $1795$ but left it in unpublished
notebooks until finally publishing it in $1809$.  Meanwhile, Legendre wrote a paper on it in
$1805$.  This led to a priority dispute.  For this early history see \cite{merriman},
\cite{merrimanB}, \cite{plackett}
and \cite{stigler}.  
For Gauss's results, translated from Latin and with commentary, see \cite{gauss}.

This has given mathematicians about $200$ years to produce an enormous literature on
least squares.  We will only scratch the surface here, our goal being to compare the three
methods.  We'll find that in general they will draw three different regression lines.
If $m$, $m_X=1/\mu$ and $\tan\theta$ are the slopes of the regression line for the respective 
$Y$, $X$ and $D$ methods, then
we will see that $\abs{m}\leq\abs{m_X}$.  There is equality if and only if all the
data points lie on a line that is neither vertical nor horizontal.
In many cases, $\abs{m}\leq\abs{\tan\theta}\leq\abs{m_X}$.
See Corollary~\ref{corollarymin}.  You can look ahead to Figure~\ref{figureparallel} to see
the discrepancy between the three methods.

The $Y$ method works well for lines that are nearly horizontal, the $X$ method
works well for lines that are nearly vertical, and the $D$ method works best for lines
in between.  All three methods are invariant under translation, meaning that if all
the data points are translated the same way in the plane the regression line translates
that way as well.  However, if all the data points are rotated by the same angle about
some fixed point in the plane, only in the $D$ method is the regression line rotated in
this way.  See Proposition~\ref{proptranslaterotate}.  
This seems like a major failing of the $Y$ and $X$ methods.

Another peculiarity is that if we had data points on two parallel lines, symmetrically
placed in the sense that a point $(x_0,y_0)$ is on one line if and only if there is a point 
$(x_0',
y_0')$ on the other line such that the line segment through $(x_0, y_0)$ and $(x_0',y_0')$ is 
perpendicular to the two parallel lines.   (Rather like the endpoints of rungs on a ladder.)
You might expect the regression line would be midway between the two parallel lines.
This is only so for the $Y$ method if the lines are horizontal and only so for the 
$X$ method if the lines are vertical.  We get the expected midpoint line in the $D$
method whenever the points aren't clustered too close together (Example~\ref{exampleparallellines},
Figure~\ref{figureparallel}).

If all the data points are evenly spaced on a circle then each method fails in its own way,
although the type of failure in the $D$ method is most satisfying.
See Example~\ref{examplecircle}.

The $Y$ method appears in pretty much every textbook on statistics.  For a detailed
discussion see \cite{drapersmith}.
The $X$ method follows from the $Y$ method by interchanging the $x$ and $y$ variables.  
See, for example, \cite[p.~227-229]{spiegelstatistics}.  
These are typically obtained as a calculus problem
in minimizing a function of two variables using the second derivative test \cite{seeley} or 
by completing the square \cite{wetherill}.
We find combining
these two methods gives an easy proof.

The $D$ method is less well known.  Adcock gave a formula for the slope in 1878 \cite{adcock}.
See also \cite{adcockD}, \cite{adcockC}.   This formula is also given on the web page
\cite{weisstein}.  (This web page references a  Mathematica notebook by D.~Sardelis and T.~Valahas,
which is also quoted in \cite{komzsik}.)
The above authors give the regression line in the form $y=mx+b$;
$m$ and $b$ are each obtained by solving a quadratic equation.  It is not clear which roots of
the quadratic to choose.
But the $D$ method is symmetrical with respect to $x$ and $y$. 
Writing the regression line as
$x\sin\theta-y\cos\theta=c$ lets us give a complete algorithm for determining $\theta$
and $c$ and removes
confusion about choosing roots of a quadratic.  It is also easy
to see that this method is invariant under rotation.  In the literature, the $D$ method is
sometimes referred to as ``perpendicular offsets".  An excellent informal introduction to the $D$
method is the web page \cite{johansen}, where the author also notices the rotational
invariance problem.

Our approach is naive in that we do not assume any results from the general theory
of data fitting.  We begin just with the minimization problem in Figure~\ref{figurethreemethods}.
The paper is self contained; only minimization using derivatives of functions
of two variables and some basic linear algebra are needed.

As soon as the data is even roughly linear the $D$ method gives a better fit than the other
two methods.  It's derivation is not overly difficult and the resulting formulas are not
too complicated.  It is our hope that this paper encourages people to use this method in the
future.
\section{Some useful background results}
This is not a paper in statistics but some quantities commonly used in linear regression
arise naturally.  Thus, we'll introduce variance and covariance and prove a few results
that will be useful later.
\begin{defn}
Let $x=(x_1,x_2,\ldots,x_n)$ and $y=(y_1,y_2,\ldots,y_n)$ be vectors in $\R^n$,
where $n\geq 2$.
The {\it mean} of $x$ is $\xbar=(1/n)\sumin x_i$ and $\xybar=(1/n)\sumin x_iy_i$.
The {\it covariance} of
$x$ and $y$ is $\cov(x,y)=\xybar-\xbar\,\ybar$.
The {\it variance} of $x$ is $\var(x)=\cov(x,x)=\xxbar-\xbar^2$.
\end{defn}
The mean is of course the average of the numbers $x_1,x_2,\ldots,x_n$.  The 
variance is a measure of how far these numbers differ from their average.
The variance
can be written as $\var(x)=(1/n)\sumin(x_i-\xbar)^2$.  It is then the mean
square distance from the average.  These quantities can be written in terms of the
norm $\norm{x}=((1/n)\sumin x_i^2)^{1/2}$ but it is more convenient to use the 
square of the norm.
A familiar quantity is the standard deviation, which is the square root of the variance.
The covariance can also be written as $\cov(x,y)=(1/n)\sumin(x_i-\xbar)(y_i-\ybar)$.
The variance and covariance turn out to be natural parameters for these problems
and we will write out answers in terms of them whenever possible.

\begin{lemma}\label{lemmavarcov}
Let $x,y\in\R^n$.  Then\\
(a) $\cov(x,y)=(1/n)\sumin(x_i-\xbar)(y_i-\ybar)=(1/(2n^2))\sum_{1\leq i,j\leq n}(x_i-x_j)(y_i-y_j)=(1/n^2)\sum_{1\leq i<j\leq n}(x_i-x_j)(y_i-y_j)$,\\
(b) $\var(x)=(1/n^2)\sum_{1\leq i<j\leq n}(x_i-x_j)^2$,\\
(c) $\var(x)\geq 0$ and $\var(x)=0$ if and only $x_1=x_2=\ldots=x_n$,\\
(d) $\var(x)=0$ if and only if  $\cov(x,y)=0$ for all $y\in\R^n$.
\end{lemma}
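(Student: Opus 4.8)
The plan is to prove part (a) by direct expansion and then obtain (b), (c), (d) as quick consequences. For the first equality in (a), I would expand $(x_i-\xbar)(y_i-\ybar)=x_iy_i-\xbar y_i-x_i\ybar+\xbar\,\ybar$, sum over $i$, and use $\sumin x_i=n\xbar$, $\sumin y_i=n\ybar$, $\sumin x_iy_i=n\xybar$; the cross terms collapse and leave $n(\xybar-\xbar\,\ybar)=n\cov(x,y)$, so dividing by $n$ gives the claim. For the double-sum form, expand $(x_i-x_j)(y_i-y_j)=x_iy_i-x_iy_j-x_jy_i+x_jy_j$ and sum over all $1\le i,j\le n$: the two ``diagonal-type'' sums $\sum_{i,j}x_iy_i$ and $\sum_{i,j}x_jy_j$ each equal $n\sumin x_iy_i=n^2\xybar$, while $\sum_{i,j}x_iy_j=(\sumin x_i)(\sumin y_j)=n^2\xbar\,\ybar$ and likewise $\sum_{i,j}x_jy_i=n^2\xbar\,\ybar$, so the whole double sum equals $2n^2(\xybar-\xbar\,\ybar)=2n^2\cov(x,y)$, and dividing by $2n^2$ yields the middle expression. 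Finally, since the summand $(x_i-x_j)(y_i-y_j)$ is symmetric under $i\leftrightarrow j$ and vanishes whenever $i=j$, the sum over all ordered pairs is exactly twice the sum over $i<j$, which converts the factor $1/(2n^2)$ into $1/n^2$ and gives the last expression.

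Part (b) is just the special case $y=x$ of (a). Part (c) then follows because $\var(x)=(1/n^2)\sum_{1\le i<j\le n}(x_i-x_j)^2$ is a sum of squares, hence $\ge 0$, and it equals $0$ exactly when every term $(x_i-x_j)^2$ vanishes; since $n\ge 2$ there is at least one such term, and all of them vanish if and only if $x_1=x_2=\cdots=x_n$. For (d), one direction is immediate: if $\cov(x,y)=0$ for every $y\in\R^n$, take $y=x$ to get $\var(x)=\cov(x,x)=0$. Conversely, if $\var(x)=0$ then by (c) every $x_i$ equals $\xbar$, so each factor $x_i-\xbar$ in $\cov(x,y)=(1/n)\sumin(x_i-\xbar)(y_i-\ybar)$ is zero, and the covariance vanishes for all $y$. (Alternatively, the converse direction of (d) is a one-line consequence of the Cauchy--Schwarz inequality $\abs{\cov(x,y)}\le\sqrt{\var(x)}\,\sqrt{\var(y)}$, but the direct argument needs nothing.)

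This whole lemma is elementary and I do not anticipate any genuine obstacle; the only real work is the bookkeeping in the double-sum expansion of (a). The one spot that warrants a moment's care is the passage from the sum over all ordered pairs $(i,j)$ to the sum over $i<j$: one must note that the diagonal terms drop out because each carries a factor $x_i-x_i$ or $y_i-y_i$, and that the remaining off-diagonal terms pair up by the $i\leftrightarrow j$ symmetry, so that exactly a factor of $2$ is removed.
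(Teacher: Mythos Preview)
Your proof is correct. The main methodological difference from the paper is in part (a): the paper dismisses the first equality as ``elementary'' (as you do) but then establishes the $i<j$ formula by induction on $n$, writing $n^2M_n$ in terms of $(n-1)^2M_{n-1}$ plus boundary terms. Your direct expansion of $\sum_{i,j}(x_i-x_j)(y_i-y_j)$ into four separable sums is cleaner and avoids the inductive bookkeeping; the induction buys nothing here since the double sum factors immediately. A second small difference is in (d): for the implication ``$\cov(x,y)=0$ for all $y$ $\Rightarrow$ $\var(x)=0$'' the paper argues that $(1/n)\sumin(x_i-\xbar)y_i=0$ for all $y$ forces each $x_i=\xbar$, whereas you simply set $y=x$. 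Your one-line argument is shorter; the paper's argument extracts slightly more, namely that the centred vector $(x_i-\xbar)_i$ must be the zero vector, but since that is exactly (c) the gain is nil. Parts (b) and (c) match the paper verbatim.
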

\begin{proof}
(a) The first equality is elementary.  The rest of the proof is by induction on $n$.  Let $M_n=\cov(x,y)$.  Note that $M_1=x_1y_1-x_1y_1=0$
and the other formulas clearly give $0$.
Suppose the formula holds for $n\geq 2$.  Then
\begin{eqnarray*}
n^2M_n & = & (n-1)\sum_{i=1}^{n-1}x_iy_i+\sum_{i=1}^{n-1}x_iy_i + nx_ny_n-\left(\sum_{i=1}^{n-1}x_i+x_n
\right)\left(\sum_{i=1}^{n-1}y_i+y_n
\right)\\
 & = & (n-1)^2M_{n-1} + \sum_{i=1}^{n-1}x_iy_i - \sum_{i=1}^{n-1}x_iy_n -\sum_{i=1}^{n-1}x_ny_i
+(n-1)x_ny_n\\
 & = & \sum_{1\leq i<j\leq n-1}(x_i-x_j)(y_i-y_j) + \sum_{i=1}^{n-1}(x_i-x_n)(y_i-y_n)\\
 & = & \sum_{1\leq i<j\leq n}(x_i-x_j)(y_i-y_j).
\end{eqnarray*}

(b) Put $y=x$ in part (a).

(c) This follows from part (b).

(d) If $x_i=k$ for all $1\leq i\leq n$ then
$$
\cov(x,y)=\xybar-\xbar\,\ybar=\frac{1}{n}\sumin ky_i-\frac{1}{n^2}\sumin k\sumin y_i=k\,\ybar-k\,\ybar=0.
$$
If $\cov(x,y)=0$ for all $y\in\R^n$ then
$$
0=\frac{1}{n}\sumin x_iy_i-\frac{1}{n^2}\sumin x_i\sumin y_i=
\frac{1}{n}\sumin (x_i-\xbar)y_i.
$$
Since this vanishes for all $y_i$ we then have $x_i=\xbar$ for each $i$.
\end{proof}

\section{Three varieties of least squares}\label{sectionregressionlines}
To find the distance from point $(x_i,y_i)$ to the line $ax+by=c$ suppose point $(x_0,y_0)$ is
on the line.  Let the distance to the line be $D$.  A unit normal to the line is $\hat{n}=(a/\sqrt{a^2+b^2},
b/\sqrt{a^2+b^2})$.  The vector $D\hat{n}-(x_i-x_0,y_i-y_0)$ is parallel to the line.  Therefore,
$(D\hat{n}-(x_i-x_0,y_i-y_0))\cdot\hat{n}=0$.  Solving for $D$ gives the well known formula
$D=\abs{ax_i+by_i-c}/\sqrt{a^2+b^2}$.

We will find it is more convenient to minimize the mean square distance.
\begin{theorem}\label{theoremthreeleast}
Let $x,y\in\R^n$.\\
(a) Let $y=mx+b$ be a line.  Define $Y_i=\abs{mx_i+b-y_i}$.  Let $Y(m,b)=(1/n)\sumin Y_i^2$.
Suppose $\var(x)\not=0$.  Then $Y$ has a unique minimum given by
\begin{eqnarray*}
m & = & \frac{\cov(x,y)}{\var(x)}=\frac{n\sumin x_iy_i-\sumin x_i \sumin y_i}{n\sumin x_i^2
-\left(\sumin x_i\right)^2}\\
b & = & \frac{\xxbar\,\ybar-\xbar\,\xybar}{\var(x)}=\frac{\ybar\var(x)-\xbar\cov(x,y)}{\var(x)}=
\frac{\sumin x_i^2\sumin y_i-\sumin x_i\sumin x_iy_i}{n\sumin x_i^2
-\left(\sumin x_i\right)^2}.
\end{eqnarray*}
(b) Let $x=\mu y+\beta$ be a line.  Define $X_i=\abs{\mu y_i+\beta-x_i}$.  
Let $X(\mu,\beta)=(1/n)\sumin X_i^2$.
Suppose $\var(y)\not=0$.  Then $X$ has a unique minimum given by
\begin{eqnarray*}
\mu & = & \frac{\cov(x,y)}{\var(y)}=\frac{n\sumin x_iy_i-\sumin x_i \sumin y_i}{n\sumin y_i^2
-\left(\sumin y_i\right)^2}\\
\beta & = & \frac{\yybar\,\xbar-\ybar\,\xybar}{\var(y)}=\frac{\xbar\var(y)-\ybar\cov(x,y)}{\var(y)}=
\frac{\sumin y_i^2\sumin x_i-\sumin y_i\sumin x_iy_i}{n\sumin y_i^2
-\left(\sumin y_i\right)^2}.
\end{eqnarray*}
(c) Let a line be given by $x\sin\theta-y\cos\theta=c$ where $-\pi/2<\theta\leq\pi/2$.  
Define $D_i=\abs{x_i\sin\theta-y_i\cos\theta-c}$.  
Let $D(\theta,c)=(1/n)\sumin D_i^2$.
The minimum of $D$ occurs at $c=\xbar\sin\theta-\ybar\cos\theta$. 
If $\var(x)=\var(y)$ and $\cov(x,y)=0$ then all values of $\theta$ give the same minimum.
If $\var(x)\not=\var(y)$
then $D$ has a unique minimum given by
\begin{equation}
\tan(2\theta)=\frac{2(\cov(x,y))}{\var(x)-\var(y)}
=\frac{2\left(n\sumin x_iy_i-\sumin x_i \sumin y_i\right)}{n\sumin x_i^2
-\left(\sumin x_i\right)^2 - n\sumin y_i^2
+\left(\sumin y_i\right)^2}.\label{tan2theta}
\end{equation}
Let $E=2\cov(x,y)/[\var(x)-\var(y)]$.
If $\var(x)>\var(y)$ then
$\theta=(1/2)\arctan(E)$.
If $\var(x)<\var(y)$ then
$\theta=(1/2)\arctan(E)+\pi/2$ if $\cov(x,y)\geq 0$ and if $\cov(x,y)<0$ then
$\theta=(1/2)\arctan(E)-\pi/2$.
If $\var(x)=\var(y)$ and $\cov(x,y)>0$ then
$\theta=\pi/4$; if $\cov(x,y)<0$ then
$\theta=-\pi/4$.
\end{theorem}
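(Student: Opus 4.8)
The plan is to handle all three parts with the same two-step device: expand the mean square error into a quadratic and then complete the square one variable at a time. For part~(a), expanding $(mx_i+b-y_i)^2$ and averaging gives $Y(m,b)=m^2\xxbar+2mb\xbar-2m\xybar+b^2-2b\ybar+\yybar$. Completing the square in $b$ converts this to $(b+m\xbar-\ybar)^2+m^2\var(x)-2m\,\cov(x,y)+\var(y)$, using $\xxbar-\xbar^2=\var(x)$, $\xybar-\xbar\,\ybar=\cov(x,y)$, $\yybar-\ybar^2=\var(y)$. Since $\var(x)\neq 0$ we have $\var(x)>0$ by Lemma~\ref{lemmavarcov}(c), so we may complete the square in $m$ as well, obtaining
\begin{equation*}
Y(m,b)=(b+m\xbar-\ybar)^2+\var(x)\Bigl(m-\frac{\cov(x,y)}{\var(x)}\Bigr)^{\!2}+\var(y)-\frac{\cov(x,y)^2}{\var(x)}.
\end{equation*}
Both squared terms have positive coefficients and vanish simultaneously only at $m=\cov(x,y)/\var(x)$, $b=\ybar-m\xbar$, which is therefore the unique minimum; rewriting these in terms of the $x_i,y_i$ and clearing the factors of $n$ gives the displayed formulas. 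Part~(b) follows from~(a) by interchanging the roles of $x$ and $y$ (and of $m,b$ with $\mu,\beta$); the hypothesis becomes $\var(y)\neq 0$.

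For part~(c) I would first eliminate $c$. Setting $u_i(\theta)=x_i\sin\theta-y_i\cos\theta$, we have $D(\theta,c)=(1/n)\sumin(u_i(\theta)-c)^2=(c-\overline{u(\theta)})^2+\var(u(\theta))$ where $\overline{u(\theta)}=\xbar\sin\theta-\ybar\cos\theta$, so for each fixed $\theta$ the minimizing $c$ is $c=\xbar\sin\theta-\ybar\cos\theta$, and what remains is to minimize $\var(u(\theta))$ over $\theta\in(-\pi/2,\pi/2]$. By bilinearity of the covariance, $\var(u(\theta))=\sin^2\theta\,\var(x)-2\sin\theta\cos\theta\,\cov(x,y)+\cos^2\theta\,\var(y)$, and the double angle identities put this in the form
\begin{equation*}
\var(u(\theta))=\frac{\var(x)+\var(y)}{2}-\Bigl(A\cos 2\theta+B\sin 2\theta\Bigr),\qquad A=\frac{\var(x)-\var(y)}{2},\quad B=\cov(x,y).
\end{equation*}
Thus minimizing $\var(u(\theta))$ is the same as maximizing $g(\theta)=A\cos 2\theta+B\sin 2\theta$.

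If $A=B=0$, i.e. $\var(x)=\var(y)$ and $\cov(x,y)=0$, then $g\equiv 0$ and every $\theta$ yields the same value of $D$. Otherwise set $R=\sqrt{A^2+B^2}>0$ and write $g(\theta)=R\cos(2\theta-\phi_0)$ with $\cos\phi_0=A/R$, $\sin\phi_0=B/R$. As $\theta$ ranges over $(-\pi/2,\pi/2]$ the angle $2\theta$ ranges once over $(-\pi,\pi]$, so $g$ attains its maximum at the single point $2\theta=\phi_0$; when $A\neq 0$ this is $\tan 2\theta=B/A$, which is~\eqref{tan2theta} (clearing the $n$'s gives the second form). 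The only remaining work, and the one place the argument needs care rather than just computation, is to read off $\phi_0$ from the signs of $A$ and $B$ and halve it so as to stay in $(-\pi/2,\pi/2]$: if $\var(x)>\var(y)$ then $A>0$ and $\phi_0=\arctan(E)$, giving $\theta=\tfrac12\arctan(E)$; if $\var(x)<\var(y)$ then $A<0$ and $\phi_0=\arctan(E)+\pi$ when $\cov(x,y)\geq 0$, $\phi_0=\arctan(E)-\pi$ when $\cov(x,y)<0$, giving $\theta=\tfrac12\arctan(E)+\pi/2$ or $\theta=\tfrac12\arctan(E)-\pi/2$ respectively; and if $\var(x)=\var(y)$ (so $A=0$) then $\phi_0=\pi/2$, $\theta=\pi/4$ when $\cov(x,y)>0$ and $\phi_0=-\pi/2$, $\theta=-\pi/4$ when $\cov(x,y)<0$. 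One must be sure throughout to pick the maximizer of $g$ --- its minimizer sits $\pi/2$ away in $\theta$ and would give the \emph{worst-fitting} line --- and to check that the reported $\theta$ actually lies in the stated interval; granting that, each case matches the claimed value and the proof is complete.
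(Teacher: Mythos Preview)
Your proof is correct and tracks the paper closely for parts (a) and (b) --- both complete the square first in the intercept, then in the slope --- and for the elimination of $c$ in part~(c). The one noteworthy difference is how you locate the optimal $\theta$ once the problem is reduced to a sinusoid in $2\theta$: the paper differentiates $h(\theta)=-(\var(x)-\var(y))\cos 2\theta-2\cov(x,y)\sin 2\theta$ and uses the monotonicity of $\tan$ on each branch to decide where $h'$ changes from negative to positive, whereas you fuse $A\cos 2\theta+B\sin 2\theta$ into $R\cos(2\theta-\phi_0)$ and read off the maximizer directly. Your route is a shade more elementary --- no derivative test at all --- and makes uniqueness on $(-\pi/2,\pi/2]$ immediate, since $2\theta$ sweeps a full period exactly once; the paper's derivative argument, on the other hand, fits its announced theme of combining the first-derivative test with completing the square. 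Either way the final step is the same sign-based case split to place $\phi_0$ (equivalently $\theta$) in the correct branch.
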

From Lemma~\ref{lemmavarcov}(c),
if $\var(x)=0$ then all the points are on a vertical line and the method in (a) cannot
be used.
In (b) we have set things up so that vertical lines are possible.  But
the method in (b) cannot be used if $\var(y)=0$ since all the points are then on a
horizontal line (Lemma~\ref{lemmavarcov}(c)).   In (c) the 
line is written in a symmetrical manner that favours neither the $x$ nor $y$ directions.
The line makes an angle $\theta$ with respect to the positive $x$ axis.  We therefore take
$-\pi/2\leq\theta\leq\pi/2$ and identify $\pm\pi/2$ as giving the same slope.  There are no
solvability conditions as with $X$ or $Y$ minimization.

Part (a), and sometimes part (b), appears in multivariable
calculus texts as an example of using partial derivatives and the second derivative test to
find an extremum.  
However, we think it's better to find the minimum by completing the square.
With
quadratics in one variable there is an algorithm for completing the square and this then
indicates the extreme value.  Quadratics
in two variables can be written as the sum or difference of squares of linear combinations of the two
variables but it is not obvious what these linear combinations are.  Using partial derivatives
shows how to do this.  This method has the advantage that it immediately
gives the location and value of the minimum.
We include a proof since the technique is used in the proof of part (c) and
also since the value of the minimum is used in Corollary~\ref{corollarymin}.
The $Y$ minimization of (a) is
in every statistics text that has a section on linear regression.
The $X$ minimization of (b) occurs, for example, in \cite[p.~227-229]{spiegelstatistics}.
Another method of proof is to find the eigenvectors of the quadratic form that appears in
the functions $X$ and $Y$.

Formula \eqref{tan2theta} appears in \cite{johansen}, and in \cite{creasy} under various statistical assumptions.
\begin{proof}
(a) Note that
\begin{eqnarray*}
Y(m,b) & = & \frac{1}{n}\sumin\left(m^2x_i^2+b^2+y_i^2+2mbx_i-2mx_iy_i-2by_i\right)\\
 & = & m^2\xxbar + b^2 +\yybar +2mb\xbar-2m\xybar-2b\ybar.
\end{eqnarray*}
The condition for an extremum is that $Y_m=Y_b=0$.  Notice that
$Y_b(m,b)=2(b+m\xbar-\ybar)=0$ when $b=\ybar-m\xbar$.  Here is the clue for writing
this quadratic form as a sum of squares;
force the term $(m\xbar+b-\ybar)^2$ to be one of the summands.  Indeed, we have
\begin{eqnarray}
Y(m,b)
 & = & m^2\var(x)+\var(y)-2m\,\cov(x,y)+(m\xbar+b-\ybar)^2\notag\\
 & = & \var(x)\left[m-\frac{\cov(x,y)}{\var(x)}\right]^2 
+\frac{\var(x)\var(y)-\cov^2(x,y)}{\var(x)}\notag\\
 & & \quad+(m\xbar+b-\ybar)^2.\label{Ysquare}
\end{eqnarray}
The unique minimum of $Y$ then occurs when $m=\cov(x,y)/\var(x)$ and 
$b=(\xxbar\,\ybar-\xbar\,\xybar)/\var(x)$.

(b) This follows from (a) upon relabelling variables; $(x_i,y_i)\mapsto(y_i,x_i)$,
$m\mapsto\mu$, $b\mapsto\beta$.

(c)  We have
\begin{eqnarray*}
D(\theta,c) & = &  \frac{1}{n}\sumin\left(x_i^2\sin^2\theta + y_i^2\cos^2\theta+c^2-x_iy_i\sin(2\theta)
 -2cx_i\sin\theta+2cy_i\cos\theta\right)\\
 & = &  \xxbar\sin^2\theta +\yybar\cos^2\theta +c^2-\xybar\sin(2\theta) -2c\xbar\sin\theta+2c\ybar\cos\theta.
\end{eqnarray*}
This is not a quadratic form but we can still find its minimum by
using a partial derivative.
We have, $D_c(\theta,c)=2(c-(\xbar\sin\theta-\ybar\cos\theta))=0$ when $c=\xbar\sin\theta-\ybar\cos\theta$.
Force the square of this term to appear and we get
\begin{align*}
&D(\theta,c)\\   
&=\left[c-(\xbar\sin\theta-\ybar\cos\theta)\right]^2+\var(x)\sin^2\theta+\var(y)\cos^2\theta
-\cov(x,y)\sin(2\theta)\\
& =\left[c-(\xbar\sin\theta-\ybar\cos\theta)\right]^2+\frac{1}{2}\left(\var(x)+\var(y)\right)
-\frac{1}{2}\left(\var(x)-\var(y)\right)\cos(2\theta)\\
&\quad-\cov(x,y)\sin(2\theta).
\end{align*}
Let $h(\theta)= -(\var(x)-\var(y))\cos(2\theta)
-2\cov(x,y)\sin(2\theta)$
then $D$ is minimized when $\theta$ is chosen to minimize
$h$
and $c$ is chosen to be $\xbar\sin\theta-\ybar\cos\theta$.
Notice that 
\begin{eqnarray*}
h'(\theta) & = & 2\left[(\var(x)-\var(y))\sin(2\theta)
-2\cov(x,y)\cos(2\theta)\right]\\
 & = & 2\cos(2\theta)\left[(\var(x)-\var(y))\tan(2\theta)
-2\cov(x,y)\right].
\end{eqnarray*}
Since the tangent function is increasing on each interval of its domain, $h'$ is increasing at its zero when $\cos(2\theta)$ and
$\var(x)-\var(y)$ have the same sign.  This point is then a minimum of $h$.
This leads to several cases.

Let $E=2\cov(x,y)/[\var(x)-\var(y)]$.

If $\var(x)>\var(y)$ then $h$ has a minimum at $\abs{\theta}<\pi/4$ such that $\tan(2\theta)=E$.
Hence, $\theta=(1/2)\arctan(E)$.

If $\var(x)<\var(y)$ then $h$ has a minimum at $\pi/4<\theta\leq\pi/2$ or $-\pi/2<\theta<-\pi/4$
such that $\tan(2\theta)=E$.
Hence, $\theta=(1/2)\arctan(E)+\pi/2$ if $E\leq 0$.  If $E>0$ then
$\theta=(1/2)\arctan(E)-\pi/2$.  Note that if $E=0$ ($\cov(x,y)=0$) then $\theta=\pi/2$ since
$-\pi/2<\theta\leq\pi/2$.

If $\var(x)=\var(y)$ then $h'(\theta)=-4\cov(x,y)\cos(2\theta)$.  If $\cov(x,y)>0$ then
$h$ has a minimum at $\theta=\pi/4$.  If $\cov(x,y)<0$ then
$h$ has a minimum at $\theta=-\pi/4$.

If $\var(x)=\var(y)$ and $\cov(x,y)=0$ then all values of $\theta$ give the same minimum,
$D(\theta,\xbar\sin\theta-\ybar\cos\theta)=(\var(x)+\var(y))/2$. 
\end{proof}
In part (c) the regression line is written in terms of $\sin\theta$ and $\cos\theta$.  These
can be computed directly from $\tan(2\theta)$ without using $\arctan$ as in \eqref{tan2theta}.
As in the proof let $E=2\cov(x,y)/[\var(x)-\var(y)]$.  The identities
$\cos^2\theta=[1+\cos(2\theta)]/2$, $\sin^2\theta=[1-\cos(2\theta)]/2$, 
$\cos^2(2\theta)=1/[1+\tan^2(2\theta)]$ let us solve for $\cos\theta$ and $\sin\theta$ in terms
of $\tan(2\theta)$ and hence in terms of $E$.
There are four different formulas for $\cos\theta$ and $\sin\theta$, depending on the signs of
$\cos\theta$, $\sin\theta$ and $\cos(2\theta)$.  The various cases at the
end of the proof show how to make the sign choices.   For example, if $\var(x)>\var(y)$ then $\abs{\theta}<\pi/4$
so $\cos\theta>0$, $\cos(2\theta)>0$  and
$$
\cos\theta={\sqrt \frac{\sqrt{1+E^2}+1}{2\sqrt{1+E^2}}}.$$
If also $\cov(x,y)>0$ then $0<\theta<\pi/4$ and $\sin\theta>0$ so that
$$
\sin\theta={\sqrt \frac{\sqrt{1+E^2}-1}{2\sqrt{1+E^2}}}.$$
Alternatively, if $\cov(x,y)<0$ then $-\pi/4<\theta<0$ and $\sin\theta<0$ so that
$$
\sin\theta=-{\sqrt \frac{\sqrt{1+E^2}-1}{2\sqrt{1+E^2}}}.$$
Other cases are handled similarly.  The results are summarized in the following corollary.

\begin{corollary}\label{corollaryD}
Let $E=2\cov(x,y)/[\var(x)-\var(y)]$.  The values of $\theta$, $\cos\theta$ and $\sin\theta$
for the regression line $x\sin\theta-y\cos\theta=c$ are given as follows.\\
(i) If $\var(x)>\var(y)$ and $\cov(x,y)\geq 0$ then $E\geq 0$ and $0\leq\theta<\pi/4$.  Then  $\theta=(1/2)\arctan(E)$ and
$$
\cos\theta={\sqrt \frac{1+E^2+\sqrt{1+E^2}}{2(1+E^2)}},\quad
\sin\theta={\sqrt \frac{1+E^2-\sqrt{1+E^2}}{2(1+E^2)}}.
$$
If $E\not=0$ then
$
\tan\theta=(-1+\sqrt{1+E^2})/E$.\\
(ii) If $\var(x)>\var(y)$ and $\cov(x,y)\leq 0$ then $E\leq 0$ and $-\pi/4<\theta\leq 0$.  Then $\theta=(1/2)\arctan(E)$ and
$$
\cos\theta={\sqrt \frac{1+E^2+\sqrt{1+E^2}}{2(1+E^2)}},\quad
\sin\theta=-{\sqrt \frac{1+E^2-\sqrt{1+E^2}}{2(1+E^2)}}.
$$
If $E\not=0$ then
$
\tan\theta=(-1+\sqrt{1+E^2})/E$.\\
(iii) If $\var(x)<\var(y)$ and $\cov(x,y)\geq 0$ then $E\leq 0$ and  $\pi/4<\theta\leq\pi/2$. Then $\theta=(1/2)\arctan(E)+\pi/2$ and
$$
\cos\theta={\sqrt \frac{1+E^2-\sqrt{1+E^2}}{2(1+E^2)}},\quad
\sin\theta={\sqrt \frac{1+E^2+\sqrt{1+E^2}}{2(1+E^2)}}.
$$
If $E\not=0$ then
$
\tan\theta=(-1-\sqrt{1+E^2})/E$.\\
(iv) If $\var(x)<\var(y)$ and $\cov(x,y)\leq 0$ then $E\geq 0$ and  $-\pi/2<\theta<-\pi/4$. Then $\theta=(1/2)\arctan(E)-\pi/2$ and
$$
\cos\theta={\sqrt \frac{1+E^2-\sqrt{1+E^2}}{2(1+E^2)}},\quad
\sin\theta=-{\sqrt \frac{1+E^2+\sqrt{1+E^2}}{2(1+E^2)}}.
$$
If $E\not=0$ then
$
\tan\theta=(-1-\sqrt{1+E^2})/E$.\\
(v) If $\var(x)=\var(y)$ and $\cov(x,y)>0$ then $\theta=\pi/4$ and $\cos\theta=\sin\theta=1/\sqrt{2}$ 
and $\tan\theta=1$.\\
(vi) If $\var(x)=\var(y)$ and $\cov(x,y)<0$ then $\theta=-\pi/4$ and $\cos\theta=-\sin\theta=1/\sqrt{2}$
and $\tan\theta=-1$.
\end{corollary}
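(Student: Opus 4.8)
The plan is to take the value of $\theta$ and the interval it lies in directly from Theorem~\ref{theoremthreeleast}(c) — that work is already done — and then to convert the relation $\tan(2\theta)=E$ into closed forms for $\cos\theta$, $\sin\theta$ and $\tan\theta$ using the three identities recorded just above the corollary: $\cos^2\theta=[1+\cos(2\theta)]/2$, $\sin^2\theta=[1-\cos(2\theta)]/2$, and $\cos^2(2\theta)=1/[1+\tan^2(2\theta)]=1/(1+E^2)$. Cases (v) and (vi) need no computation at all: when $\var(x)=\var(y)$ and $\cov(x,y)\neq 0$, Theorem~\ref{theoremthreeleast}(c) already gives $\theta=\pm\pi/4$, and $\cos(\pm\pi/4)=\sin(\pi/4)=-\sin(-\pi/4)=1/\sqrt2$, $\tan(\pm\pi/4)=\pm 1$, which is exactly what (v) and (vi) assert.

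For cases (i)--(iv) the first step is to pin down the sign of $\cos(2\theta)$. In (i) and (ii), Theorem~\ref{theoremthreeleast}(c) puts $\theta$ in $(-\pi/4,\pi/4)$, so $2\theta\in(-\pi/2,\pi/2)$ and $\cos(2\theta)>0$, whence $\cos(2\theta)=1/\sqrt{1+E^2}$. In (iii) and (iv), $\theta$ lies in $(\pi/4,\pi/2]$ or in $(-\pi/2,-\pi/4)$, so $2\theta$ lies in $(\pi/2,\pi]\cup(-\pi,-\pi/2)$ and $\cos(2\theta)\le 0$, whence $\cos(2\theta)=-1/\sqrt{1+E^2}$. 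Substituting into the half-angle identities gives $\cos^2\theta=[\sqrt{1+E^2}\pm 1]/(2\sqrt{1+E^2})$ and $\sin^2\theta=[\sqrt{1+E^2}\mp 1]/(2\sqrt{1+E^2})$, with the upper sign in (i)--(ii) and the lower sign in (iii)--(iv); multiplying numerator and denominator by $\sqrt{1+E^2}$ puts these in the form displayed in the statement.

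The second step is to choose the correct square roots, and here the interval for $\theta$ from Theorem~\ref{theoremthreeleast}(c) fixes every sign: in (i), $0\le\theta<\pi/4$ forces $\cos\theta>0$ and $\sin\theta\ge 0$; in (ii), $-\pi/4<\theta\le 0$ forces $\cos\theta>0$ and $\sin\theta\le 0$; in (iii), $\pi/4<\theta\le\pi/2$ forces $\cos\theta\ge 0$ and $\sin\theta>0$; in (iv), $-\pi/2<\theta<-\pi/4$ forces $\cos\theta>0$ and $\sin\theta<0$. Taking roots with these signs yields the stated formulas for $\cos\theta$ and $\sin\theta$. Finally, writing $s=\sqrt{1+E^2}$ and dividing, $\tan^2\theta=(s\mp 1)/(s\pm 1)$; rationalizing with $s^2-1=E^2$ gives $\abs{\tan\theta}=(s-1)/\abs{E}$ in (i)--(ii) and $\abs{\tan\theta}=(s+1)/\abs{E}$ in (iii)--(iv), and attaching the sign of $\tan\theta=\sin\theta/\cos\theta$ determined above produces $\tan\theta=(-1+\sqrt{1+E^2})/E$ in (i)--(ii) and $\tan\theta=(-1-\sqrt{1+E^2})/E$ in (iii)--(iv) whenever $E\neq 0$. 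The only real obstacle is keeping the six sign cases straight; once the interval for $\theta$ is in hand, everything reduces to the bookkeeping just described, and the claims about the sign of $E$ in each case follow at once from $E=2\cov(x,y)/[\var(x)-\var(y)]$ together with the hypotheses on $\var(x)-\var(y)$ and $\cov(x,y)$.
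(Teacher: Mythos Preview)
Your proof is correct and follows essentially the same approach as the paper: take $\theta$ and its interval from Theorem~\ref{theoremthreeleast}(c), use the identities $\cos^2\theta=[1+\cos(2\theta)]/2$, $\sin^2\theta=[1-\cos(2\theta)]/2$, $\cos^2(2\theta)=1/(1+E^2)$ to solve for $\cos\theta$ and $\sin\theta$, and let the interval fix the signs of $\cos(2\theta)$, $\cos\theta$, and $\sin\theta$. Your write-up is in fact more thorough than the paper's, which sketches only case~(i) explicitly and leaves the remaining sign choices to the reader; your explicit treatment of the rationalization step for $\tan\theta$ (using $s^2-1=E^2$ so that $\abs{\tan\theta}=(s\mp 1)/\abs{E}$, with the sign recovered from $\sin\theta/\cos\theta$) matches the paper's one-line remark that one must use $\sqrt{E^2}=-E$ when $E<0$.
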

In the cases where $E<0$ we need to note that $\sqrt{E^2}=-E$ when writing the expression for $\tan\theta$.
Case (v) can be obtained from case (i) by letting $E\to\infty$ or from case (iii) by letting $E\to-\infty$.
Case (vi) can be obtained from case (ii) by letting $E\to-\infty$ or from case (iv) by letting $E\to\infty$.

The conditions appearing at the end of part (c) are independent.  For example, if $n=3$, let $x=(1,0,0)$,
 then
$\var(x)=2/9$.  
If $y=(0,1/\sqrt{3},-1/\sqrt{3})$ then $\var(y)=2/9$ and $\cov(x,y)=0$.
If $y=(0,1,0)$ then $\var(y)=2/9$ and $\cov(x,y)=-1/9<0$.
If $y=(1,0,1)$ then $\var(y)=2/9$ and $\cov(x,y)=1/9>0$.

The next corollary points out various relationships between the slopes of the regression
lines.  In particular, the $Y$ minimization method tends to underestimate the magnitude
of the slope while the $X$ minimization method tends to overestimate it.  
Example~\ref{exampleparallellines} also shows this phenomenon.
\begin{corollary}\label{corollarymin}
(a) In the $Y$ method the regression line can be written $(y-\ybar)\var(x)=(x-\xbar)\cov(x,y)$;
in the $X$ method it can be written $(x-\xbar)\var(y)=(y-\ybar)\cov(x,y)$; 
in the $D$ method it can be written $(x-\xbar)\sin\theta=(y-\ybar)\cos\theta$.
(b) The point $(\xbar,\ybar)$ is on the regression line of each method.  Hence, the three lines 
intersect here.
(c) For all $x,y\in\R^n$ we have $\var(x)\var(y)\geq\cov^2(x,y)$ with equality
if and only if all the points $(x_i,y_i)$ lie on the same line.
(d)  If they exist, the slopes of the regression lines in the $Y$, $X$ and $D$ methods have
the same sign.
(e) Let $m=\cov(x,y)/\var(x)$ be the slope of the regression line in the $Y$ method and let
$m_X=\var(y)/\cov(x,y)$ be the slope in the $X$ method.
The inequality
$\abs{m}\leq\sqrt{\var(y)/\var(x)}\leq \abs{m_X}$ holds, with equality if and only if
all the points $(x_i,y_i)$ lie on the same line.
(f) Let $\tan\theta$ be the slope of the regression line in the $D$ method.
The inequality $\abs{m}\leq\abs{\tan\theta}\leq \abs{m_X}$ holds in cases (i), (ii), (v) and (vi) of
Corollary~\ref{corollaryD}.  The inequality also holds in cases (iii) and (iv) if
$2\cov^2(x,y)\geq\var(x)\abs{\var(x)-\var(y)}$.
\end{corollary}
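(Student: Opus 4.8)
The plan is to dispatch (a), (b), (c), (d), (e) quickly and put the real work into (f).

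For (a) I would just rewrite the minimizers from Theorem~\ref{theoremthreeleast}: in the $Y$ method $b=\ybar-m\xbar$, so $y=mx+b$ becomes $\var(x)(y-\ybar)=\cov(x,y)(x-\xbar)$; the $X$ case is the same with $x$ and $y$ interchanged; and in the $D$ method $c=\xbar\sin\theta-\ybar\cos\theta$, so $x\sin\theta-y\cos\theta=c$ becomes $(x-\xbar)\sin\theta=(y-\ybar)\cos\theta$. Part (b) is then immediate, since substituting $(x,y)=(\xbar,\ybar)$ into each equation of (a) makes both sides vanish. For (c) I would apply the Cauchy--Schwarz inequality to the deviation vectors $(x_i-\xbar)_i$ and $(y_i-\ybar)_i$: by Lemma~\ref{lemmavarcov}(a) their inner product and squared norms are $n\,\cov(x,y)$, $n\,\var(x)$, $n\,\var(y)$, giving $\cov^2(x,y)\le\var(x)\var(y)$, and equality in Cauchy--Schwarz is the linear dependence of these two vectors, which unwinds to the statement that all the points $(x_i,y_i)$ lie on one line. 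Part (e) is then a rearrangement of (c): $\abs m=\abs{\cov(x,y)}/\var(x)\le\sqrt{\var(y)/\var(x)}$ after multiplying (c) by $\sqrt{\var(x)}$, and $\sqrt{\var(y)/\var(x)}\le\var(y)/\abs{\cov(x,y)}=\abs{m_X}$ after multiplying (c) by $\abs{\cov(x,y)}/\sqrt{\var(y)}$, with equality everywhere matching equality in (c). For (d), if the three slopes all exist then $\cov(x,y)\ne0$ (otherwise $m_X$ is undefined; and then $\var(x),\var(y)\ne0$ as well, by Lemma~\ref{lemmavarcov}(d)), so $m=\cov(x,y)/\var(x)$ and $m_X=\var(y)/\cov(x,y)$ carry the sign of $\cov(x,y)$, and checking the six cases of Corollary~\ref{corollaryD} shows $\tan\theta$ carries that sign too.

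For (f) I would argue uniformly rather than case by case, assuming $\cov(x,y)\ne0$ so that all three slopes exist. By (a) the $D$-line has slope $\tan\theta$; the first-order condition from the proof of Theorem~\ref{theoremthreeleast}(c) gives $[\var(x)-\var(y)]\sin(2\theta)=2\cov(x,y)\cos(2\theta)$, and expanding the double angles and dividing by $\cos^2\theta$ (nonzero, since $\theta\ne\pm\pi/2$ when $\cov(x,y)\ne0$ by Corollary~\ref{corollaryD}) shows that $t=\tan\theta$ is a root of
\[
 f(t)=\cov(x,y)\,t^2+[\var(x)-\var(y)]\,t-\cov(x,y).
\]
The product of the two roots of $f$ is $-1$, so the roots are $\tan\theta$ and $-1/\tan\theta$, of opposite sign, and by (d) the root $\tan\theta$ carries the sign of $\cov(x,y)$. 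A direct substitution gives
\[
 f(m)=\frac{\cov(x,y)\,[\cov^2(x,y)-\var(x)\var(y)]}{\var^2(x)},\qquad
 f(m_X)=\frac{\var(x)\var(y)-\cov^2(x,y)}{\cov(x,y)},
\]
so by (c) the value $f(m)$ has the sign of $-\cov(x,y)$ and $f(m_X)$ has the sign of $\cov(x,y)$. Since the leading coefficient of $f$ is $\cov(x,y)$, the sign of $f(m)$ forces $m$ into the closed interval bounded by the two roots, and since $m$ has the sign of $\cov(x,y)$ this pins $m$ between $0$ and $\tan\theta$, so $\abs m\le\abs{\tan\theta}$; likewise the sign of $f(m_X)$ puts $m_X$ outside that interval, again on the side of $0$ where $\tan\theta$ lies, so $\abs{\tan\theta}\le\abs{m_X}$. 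Equality throughout holds exactly when $f(m)=f(m_X)=0$, i.e. when the points are collinear.

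The step I expect to need the most care is this final sign bookkeeping, since it runs oppositely for $\cov(x,y)>0$ (where $f$ opens upward and is negative strictly between its roots) and for $\cov(x,y)<0$ (where $f$ opens downward, so the chain of inequalities among negative numbers reverses on passing to absolute values); writing out the four sign patterns settles it. This uniform argument in fact gives $\abs m\le\abs{\tan\theta}\le\abs{m_X}$ in all six cases of Corollary~\ref{corollaryD}. The hypothesis $2\cov^2(x,y)\ge\var(x)\abs{\var(x)-\var(y)}$ displayed in (f) is exactly what the alternative, computational proof requires: there one inserts the explicit $\tan\theta$ of Corollary~\ref{corollaryD} and reduces each inequality to $\cov^2(x,y)\le\var(x)\var(y)$ by clearing denominators and squaring, and in cases (iii) and (iv) the bound $\abs m\le\abs{\tan\theta}$ takes the form $2\cov^2(x,y)-\var(x)\abs{\var(x)-\var(y)}\le\var(x)\sqrt{[\var(x)-\var(y)]^2+4\cov^2(x,y)}$, which one may square only after checking that its left side is nonnegative, and which is otherwise immediate. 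Either route delivers (f).
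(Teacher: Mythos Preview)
Your proofs of (a)--(e) are correct and track the paper closely; the only mild divergence is in (c), where you invoke Cauchy--Schwarz on the deviation vectors directly, whereas the paper reads off the inequality from the completed-square formula \eqref{Ysquare} for $Y(m,b)$. The paper itself remarks afterwards that (c) is equivalent to Cauchy--Schwarz, so this is a matter of taste.

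Part (f) is where your argument genuinely departs from the paper, and it is both correct and strictly stronger. The paper proceeds case by case through Corollary~\ref{corollaryD}, inserting the explicit closed form for $\tan\theta$, clearing denominators, and squaring to reduce to (c); in cases (iii) and (iv) the expression $2\cov^2(x,y)+\var(x)[\var(x)-\var(y)]$ can be negative, so the squaring step is only reversible under the side hypothesis $2\cov^2(x,y)\ge\var(x)\lvert\var(x)-\var(y)\rvert$, which is why that hypothesis appears in the statement. Your route avoids this entirely: you observe that $t=\tan\theta$ solves the quadratic $f(t)=\cov(x,y)\,t^2+[\var(x)-\var(y)]\,t-\cov(x,y)$, whose two roots have product $-1$ and hence opposite signs, evaluate $f(m)$ and $f(m_X)$ explicitly, and use (c) together with the sign of the leading coefficient to locate $m$ inside and $m_X$ outside the root interval on the correct side of $0$. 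This is clean, uniform across all six cases, and shows that the extra hypothesis in (f) is an artifact of the paper's proof method rather than a genuine obstruction---the inequality $\lvert m\rvert\le\lvert\tan\theta\rvert\le\lvert m_X\rvert$ in fact holds whenever all three slopes exist. Your closing paragraph correctly diagnoses exactly where and why the paper's computational argument needs the side condition.
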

\begin{proof}
(a) This follows from the formulas for $m$, $b$, $\mu$, $\beta$ and $c$ in the theorem.
(b) This now follows immediately from (a) of this corollary.
(c) In the proof of the theorem, 
the quantity $Y(m,b)\geq 0$ and is written as a sum of squares in \eqref{Ysquare}.
Since $\cov(x,y)\geq 0$ (Lemma~\ref{lemmavarcov}(c)) the inequality follows.
There is equality  if and only if the sum of squares of $Y_i$ is zero.  But this happens
if and only if $mx_i+b-y_i=0$ for all $1\leq i\leq n$, i.e., the points are on the
regression line.  If the line is vertical  then $\var(x)=0$ (Lemma~\ref{lemmavarcov}(d)) and the
inequality follows trivially.
(d) Since the variances are positive the slopes $m$ and $m_X$ 
have the same sign.  From Corollary~\ref{corollaryD} this can be seen to be the same 
sign as $\tan\theta$. (e)  The inequality follows from (c).
(f) In cases (v) and (vi) of Corollary~\ref{corollaryD} the inequality follows from part (c) above.
 Using the expression for $\tan\theta$ in Corollary~\ref{corollaryD}(i) and
$E=2\cov(x,y)/[\var(x)-\var(y)]$ we get
$$
\tan\theta=\frac{\sqrt{[\var(x)-\var(y)]^2+4\cov^2(x,y)}-[\var(x)-\var(y)]}{2\cov(x,y)}.
$$
And, since the slopes are positive and $m=\cov(x,y)/\var(x)$ then $\tan\theta\geq m$ if and only if
\begin{equation}
\var(x)\sqrt{[\var(x)-\var(y)]^2+4\cov^2(x,y)}\geq 2\cov^2(x,y)+\var(x)[\var(x)-\var(y)],\label{varinequality}
\end{equation}
where we have used the fact that the covariance is positive in this case.  Since the right hand side
of the above inequality is non-negative we get an equivalent inequality by squaring both sides.  This
then simplifies to $\var(x)\var(y)\geq\cov^2(x,y)$ (part (c)).  The inequality with $m_X$ is similar.
So is the case (ii) in Corollary~\ref{corollaryD}, being careful to note that the slopes are negative,
and the inequality is reversed upon multiplication by $\cov(x,y)$.  In case (iii) of 
Corollary~\ref{corollaryD} 
the slopes are positive and $[\var(x)-\var(y)]<0$ so $\tan\theta\geq m$ if and only if \eqref{varinequality}
holds.  But now, we get an equivalent inequality by squaring only when the right hand side of \eqref{varinequality}
 is non-negative.
The other cases are similar.
\end{proof}
Part (c) is equivalent to the Cauchy--Schwarz inequality.

When $\var(x)$ is zero the slope is not defined in Theorem~\ref{theoremthreeleast}(a).  However,
writing the regression line as $(y-\ybar)\var(x)=(x-\xbar)\cov(x,y)$ doesn't help since then
the covariance is also zero (Lemma~\ref{lemmavarcov}(d)).  Similarly if $\var(y)=0$ for the
$X$ minimization method.  There are no such restrictions for the $D$ method.

When we write down the minimum of 
$X(\mu,\beta)$ and $D(\theta,c)$ we see we get the same inequality as in part (c) of the corollary.
For $D$ the four cases in Corollary~\ref{corollaryD} must be used.

\section{Comparison of the three methods}
Now we show why minimizing the distance to the line (Theorem~\ref{theoremthreeleast}(c))
is often superior to the vertical and horizontal methods.  

You might expect the regression line to have certain invariances under rigid transformations.
We will say it is invariant under translations if, when you translate all points $(x_i,y_i)$ the
same way the regression line translates the same way as well.  All three methods have this type
of invariance.  We will say the regression line is invariant under rotations if, 
when all points $(x_i,y_i)$ are rotated the
regression line undergoes the same rotation.  The regression line is invariant under rotations in
the distance to the line method but not in the other two methods.  That's strange.  You rotate all
the data points by the same angle but the regression line does not rotate like that.   

Suppose $u,v\in\R$ and the translation is $(x_i,y_i)\mapsto (x_i',y_i')=(x_i+u,y_i+v)$
for each $1\leq i\leq n$.  If there is invariance under translation and the regression line is
$y=mx+b$ then it becomes $y'-v=m(x'-u)+b$ or $y'=mx'+b'$ where $b'=b-mu+v$.  The
slope does not change under translation.  If the line is written $x= \mu y+\beta$ then the translated
version is $x'=\mu y'+\beta'$ where $\beta'=\beta-\mu v +u$.  For $x\sin\theta-y\cos\theta=c$
the invariant regression line becomes $x'\sin\theta-y'\cos\theta=c'$ where $c'=c+u\sin\theta-v\cos\theta$.

Suppose a rotation about the origin is given by 
\begin{equation}
\left(
\begin{array}{c}
x_i'\\
y_i'
\end{array}
\right)
=
\left(
\begin{array}{cc}
\cos\phi & -\sin\phi\\
\sin\phi & \cos\phi
\end{array}
\right)\!
\left(
\begin{array}{c}
x_i\\
y_i
\end{array}
\right).\label{rotation}
\end{equation}
The inverse transformation is obtained by changing the sign of $\phi$.
If the line $y=mx+b$ is invariant then after rotation it becomes 
$(\cos\phi-m\sin\phi)y'=(\sin\phi+m\cos\phi)x'+b$.
If the line $x=\mu y+\beta$ is invariant then after rotation it becomes 
$(\cos\phi+\mu\sin\phi)x'=(-\sin\phi+\mu\cos\phi)y'+\beta$.
If the line $(x-\xbar)\sin\theta=(y-\ybar)\cos\theta$ is invariant then after rotation it becomes 
$(x'-\xbar')\sin(\theta')=(y'-\ybar')\cos(\theta')$ where $\theta'=\theta+\phi$.
Because of translation invariance it doesn't matter which point we rotate
about.

We will see that the three methods in general give different regression lines for the same data and that the
 $Y$ minimization
method favours making the magnitude of the slope too small and the $X$ minimization method favours making
the magnitude of the slope too large, whereas the distance to the line minimization method gets things
(nearly) right.
\begin{prop}\label{proptranslaterotate}
(a) The regression lines in the $Y$, $X$ and $D$ minimization methods of Theorem~\ref{theoremthreeleast} are invariant under
translations.
(b) If all the points $(x_i,y_i)$ lie on a line that is not vertical then the regression line for $Y$
minimization is this line.  If all the points lie on a line that is not horizontal then the regression line 
for $X$
minimization is this line.
If all the points lie on a line then the regression line for $D$
minimization is this line.
(c) Only the regression line for $D$ minimization is invariant under rotation.
\end{prop}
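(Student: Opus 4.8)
The plan rests on two elementary transformation facts: under a translation $(x_i,y_i)\mapsto(x_i+u,y_i+v)$ the means satisfy $\xbar\mapsto\xbar+u$, $\ybar\mapsto\ybar+v$ while $\var(x)$, $\var(y)$, $\cov(x,y)$ are unchanged (since $x_i-\xbar$ and $y_i-\ybar$ are unchanged, using the formulas of Lemma~\ref{lemmavarcov}(a)); and a rotation is an isometry of the plane. For part (a) I would simply substitute the translated values of the means and the unchanged values of the variance and covariance into the three forms $(y-\ybar)\var(x)=(x-\xbar)\cov(x,y)$, $(x-\xbar)\var(y)=(y-\ybar)\cov(x,y)$, $(x-\xbar)\sin\theta=(y-\ybar)\cos\theta$ of Corollary~\ref{corollarymin}(a), noting that $\theta$ itself depends on the data only through $\var$ and $\cov$ by Theorem~\ref{theoremthreeleast}(c). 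In each case the regression line of the translated data is visibly the translate of the original regression line.

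For part (b) the point is that each of $Y$, $X$, $D$ is nonnegative and, under its solvability hypothesis, has a unique minimizer by Theorem~\ref{theoremthreeleast}. If all points lie on a non-vertical line $y=m_0x+b_0$ (so that the points are not all one point, hence $\var(x)\neq0$ and the $Y$-regression line is defined) then $Y(m_0,b_0)=0$, so $(m_0,b_0)$ is the minimizer and the $Y$-regression line is that line; symmetrically for $X$ and a non-horizontal line. If all points lie on a line $x\sin\theta_0-y\cos\theta_0=c_0$ then $D(\theta_0,c_0)=0$; one must only check that the exceptional case $\var(x)=\var(y)$, $\cov(x,y)=0$ of Theorem~\ref{theoremthreeleast}(c) does not arise, which is immediate because there the minimum value of $D$ equals $(\var(x)+\var(y))/2$, which is positive unless all the points coincide. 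Hence $(\theta_0,c_0)$ is the unique minimizer and the $D$-regression line is that line.

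For part (c), the $D$ method is rotation invariant because a rotation $R_\phi$ preserves point-to-line distances and permutes the set of all lines; concretely $R_\phi$ carries $x\sin\theta-y\cos\theta=c$ to $x\sin(\theta+\phi)-y\cos(\theta+\phi)=c$, matching $\theta'=\theta+\phi$. Thus $(1/n)\sum D_i^2$, viewed as a function of the line, is minimized for the rotated data exactly at $R_\phi(\ell^\ast)$ when it is minimized for the original data at $\ell^\ast$, so the $D$-regression line rotates with the data. To see the $Y$ and $X$ methods are \emph{not} rotation invariant, one well-chosen example suffices: take $x=(-1,0,1)$, $y=(0,1,0)$, so $\cov(x,y)=0$ and $\var(x),\var(y)$ are both nonzero; then $m=0$, so the $Y$-regression line is horizontal and the $X$-regression line is vertical. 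Applying the quarter-turn $(x_i,y_i)\mapsto(-y_i,x_i)$ gives new data with $\var(x')=\var(y)$, $\var(y')=\var(x)$ and $\cov(x',y')=-\cov(x,y)=0$, so its $Y$-regression line is again horizontal and its $X$-regression line again vertical; but a quarter-turn takes horizontal lines to vertical lines and vice versa, so neither line rotated with the data.

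The routine part is (a) and the positive half of (c), which are pure substitution and isometry. The one place to take care is the uniqueness bookkeeping in (b) — especially observing that the degenerate case of Theorem~\ref{theoremthreeleast}(c) is incompatible with collinear non-constant data — and, in (c), remembering that $\theta+\phi$ may need to be reduced modulo $\pi$ (with a compensating sign change in $c$), which does not affect the geometric statement. I expect the negative half of (c) to be where a reader wants detail, so the explicit example above is doing the real work there.
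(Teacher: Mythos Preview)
Your argument is correct, and for (a) and the counterexample half of (c) it parallels the paper (which uses the points $(0,0),(1,0),(2,1)$ under a quarter-turn rather than your symmetric example, but with the same logic). Where you genuinely diverge is in (b) and the positive half of (c). For (b) the paper translates to the origin and plugs $(x_i,\alpha x_i)$ directly into the explicit formulas of Theorem~\ref{theoremthreeleast} to recover $m=\alpha$, $b=0$, and then works through the cases of Corollary~\ref{corollaryD} for the $D$ line; your observation that $Y,X,D\ge 0$ with value $0$ at the given line, combined with uniqueness of the minimizer, sidesteps all of that computation. For the rotation invariance of $D$ the paper computes how $\var(x)$, $\var(y)$, $\cov(x,y)$ transform under \eqref{rotation} and then uses the tangent addition formula to show $E'=\tan(2(\theta+\phi))$; your isometry argument (rotations preserve point-to-line distances, hence conjugate the function $\ell\mapsto(1/n)\sum D_i^2$) is shorter and more conceptual. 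The paper's route has the minor advantage that the transformation $\theta'=\theta+\phi$ is verified at the level of the explicit formula, so the $\bmod\ \pi$ reduction you rightly flag is handled automatically. One small slip worth fixing: in (b) your parenthetical ``so that the points are not all one point, hence $\var(x)\neq 0$'' is not a deduction from the hypothesis---a single repeated point lies on plenty of non-vertical lines---so $\var(x)\neq 0$ should be stated as the standing solvability assumption of Theorem~\ref{theoremthreeleast}(a), exactly as the paper tacitly does.
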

\begin{proof}
(a) We can see from Lemma~\ref{lemmavarcov}(a), (b) that the variance and covariance are invariant under translation.
But in Theorem~\ref{theoremthreeleast} the slopes are written strictly in terms of the variance and
covariance.  Hence, they do not change.  And, $b$, $\beta$ and $c$ change as in the paragraph above.

(b) Due to translation invariance we can assume the line is through the origin.  Accordingly,
let the points be $(x_i,\alpha x_i)$ for $\alpha\in\R$.  For $Y$ minimization we have
$$
m=\frac{\cov(x,y)}{\var(x)}=\frac{n\alpha\sumin x_i^2-\alpha\left(\sumin x_i\right)^2}{n\sumin x_i^2-\left(\sumin x_i\right)^2}=\alpha
$$
and
$$
b  =  \frac{\xxbar\,\ybar-\xbar\,\xybar}{\var(x)}=\frac{\alpha\sumin x_i^2\sumin x_i
-\alpha\sumin x_i\sumin x_i^2}{n\sumin x_i^2-\left(\sumin x_i\right)^2}=0.
$$
This gives the regression line $y=\alpha x$ and all data points lie on this line.
The $X$ minimization is similar if we write the points as $(\alpha x_i,x_i)$. We have $\ybar=\alpha\xbar$, $\var(y)=\alpha^2\var(x)$ and
$\cov(x,y)=\alpha\var(x)$.  For $D$ minimization we then have
$E=2\cov(x,y)/(\var(x)-\var(y))=2\alpha/(1-\alpha^2)$.  We get case (i) in Corollary~\ref{corollaryD} if
$0\leq\alpha<1$.  Then $1+E^2=[(1+\alpha^2)/(1-\alpha^2)]^2$ so that after a little algebra we get
$$
\sin\theta={\sqrt \frac{1+E^2-\sqrt{1+E^2}}{2(1+E^2)}}=\frac{\alpha}{\sqrt{1+\alpha^2}}.
$$
Similarly, $\cos\theta=1/\sqrt{1+\alpha^2}$ so that $\tan\theta=\alpha$.  And, $c=\xbar\sin\theta-\ybar\cos\theta=0$.  The regression
line is then $y=\alpha x$ which is the line on which the data lie.  The other cases in 
Corollary~\ref{corollaryD} are similar, being careful to watch the sign of $\alpha$.

(c) Let $(x_1,y_1)=(0,0)$, $(x_2,y_2)=(1,0)$ and $(x_3,y_3)=(2,1)$.  A calculation gives
$\xbar=1$, $\xxbar=5/3$, $\var(x)=2/3$, $\ybar=1/3=\yybar$, $\var(y)=2/9$, $\cov(x,y)=1/3$.  This shows the $Y$ minimization
regression line is $y=x/2-1/6$ and the $X$ minimization regression line is 
$x=(3/2)y +1/2$.  If we rotate with $\phi=\pi/2$ we get the new set of points
$(x_1',y_1')=(0,0)$, $(x_2',y_2')=(0,1)$, $(x_3',y_3')=(-1,2)$.
Now, $\overline{x'}=-1/3$, $\overline{x'^2}=1/3$, $\var(x')=2/9$,
$\overline{y'}=1$, $\overline{y'^2}=5/3$, $\var(y')=2/3$, $\cov(x',y')=-1/3$.  These give the $Y$ minimization line $y'=-(3/2)x'+1/2$ and the
$X$ minimization line $x'=-y'/2+1/6$.  Whereas, according to the formulas preceding the proposition, if there
is invariance under rotation, we should get
$y'=-2x'+1/3$ and $x'=-(2/3)y'+1/3$.

Now show the $D$ minimization method is invariant under rotation.  Using \eqref{rotation} we have
$\xbar=\overline{x'}\cos\phi+\overline{y'}\sin\phi$ and $\ybar=-\overline{x'}\sin\phi+\overline{y'}\cos\phi$.  These give
\begin{eqnarray*}
\var(x) & = & \var(x')\cos^2\phi+\cov(x',y')\sin(2\phi)+\var(y')\sin^2\phi,\\
\var(y) & = & \var(x')\sin^2\phi-\cov(x',y')\sin(2\phi)+\var(y')\cos^2\phi,\\
2\cov(x,y) & = & -\var(x')\sin(2\phi)+2\cov(x',y')\cos(2\phi)+\var(y')\sin(2\phi).
\end{eqnarray*}
The quantity $E$ and its counterpart $E'$ in the primed variables are related by
\begin{eqnarray*}
E & = & \frac{2\cov(x,y)}{\var(x)-\var(y)}=\tan(2\theta)\\
 & = & \frac{-[\var(x')-\var(y')]\sin(2\phi)+2\cov(x',y')\cos(2\phi)}{
[\var(x')-\var(y')]\cos(2\phi)+2\cov(x',y')\sin(2\phi)}\\
 & = & \frac{E'-\tan(2\phi)}{1+E'\tan(2\phi)}. 
\end{eqnarray*}
Solving for $E'$ and using the tangent addition formula gives
$$
E'=\frac{E+\tan(2\phi)}{1-E\tan(2\phi)}=\frac{\tan(2\theta)+\tan(2\phi)}{1-\tan(2\theta)\tan(2\phi)}
=\tan([2(\theta+\phi)]).
$$
Hence, the new angle for the regression line is $\theta'=\theta +\phi$.  Also,
\begin{align*}
&(x-\xbar)\sin\theta-(y-\ybar)\cos\theta\\
&=[(x'-\xbar')\cos\phi+(y'-\ybar')\sin\phi]\sin\theta-[-(x'-\xbar')\sin\phi+(y'-\ybar')\cos\theta]\cos\theta\\
&=(x'-\xbar')\sin(\theta+\phi)-(y'-\ybar')\cos(\theta+\phi).
\end{align*}
Hence, the line is invariant under rotation.
\end{proof}

Of course, we can also see that the $Y$ and $X$ minimization methods are not invariant under rotation
by rotating onto a vertical or horizontal line for which the respective regression lines do not exist.

For the above example we have $\var(x)=2/3$, $\var(y)=2/9$, $\cov(x,y)=1/3$ so $E=3/2$.
Using part (i) of Corollary~\ref{corollaryD} we see that for the points $(0,0)$, $(1,0)$ and $(2,1)$ of
part (c) the regression line has
$$
\tan\theta=\frac{\sqrt{13}-2}{3}=\tan(0.5\arctan(1.5))\doteq
0.53518.
$$  The line can be written as $y=x\tan\theta-\xbar\tan\theta+\ybar$.  It is then approximately
$y=(0.53518)x-0.20185$.
The three regression lines are different.  The inequality in Corollary~\ref{corollarymin}(e)
reads $1/2<1/\sqrt{3}<2/3$ ($m<\sqrt{\var(y)/\var(x)}<m_X$).
The inequality in Corollary~\ref{corollarymin}(f)
reads $1/2<0.53518<2/3$ ($m<\tan\theta<m_X$). 

Now we look at how the three methods perform in two test cases.  In the first, the data points 
are symmetrically arrayed on two parallel lines.  In the second the points are on a circle.
In both cases the $D$ method outperforms the $X$ and $Y$ methods.

\begin{example}\label{exampleparallellines}  
Suppose we had all the data
points arrayed symmetrically on two parallel lines.  By this we mean there are parallel lines
$L_+$ and $L_-$ such that point $(x_0,y_0)$ is on line $L_-$ if and only if there is a point $(x_0',
y_0')$ on $L_+$ so that the line through $(x_0, y_0)$ and $(x_0',y_0')$ is perpendicular to $L_+$
and $L_-$.  It seems reasonable that the regression line would be the unique line $L$ parallel
to $L_+$ and $L_-$ lying midway between them.  This only happens for the $D$ minimization method
and only in certain cases.
To see this, note that due to translation invariance we can assume $L$ is through
the origin.  First consider the case of vertical lines.  Let $A>0$ and let $L_+$ be the line
$x=A$ and $L_-$ the line $x=-A$.  We take a vector $t\in\R^n$ with $\var(t)>0$ and define
$(x_{\pm i},y_{\pm i})
=(\pm A,t_i)$ for $1\leq i\leq n$.  Then $x,y\in\R^{2n}$.  We have 
$\xbar=(1/(2n))\sumin(x_{+i}+x_{-i})=(1/(2n))\sumin(A-A)=0$ and $\xxbar=(1/(2n))\sumin
2A^2=A^2$.  Hence, $\var(x)=A^2$.  Similarly, $\ybar=\tbar$, $\yybar=\ttbar$ and
$\var(y)=\var(t)$.  This gives $\cov(x,y)=0$.  Using the formulas in Theorem~\ref{theoremthreeleast},
the $Y$ minimization regression line is $y=\bar{t}$ (not good) and the $X$ minimization line is $x=0$
(good, we get line $L$).  What about the $D$ minimization method?  Since the covariance is zero we get $E=0$ and
 the angle depends on
the relative size of $\var(x)$ and $\var(y)$.
If $A^2>\var(t)$ then $\theta=(1/2)\arctan(0)=0$ and $c=\xbar\sin\theta-\ybar \cos\theta=-\bar{t}$.
The regression line is $y=\bar{t}$.
If $A^2<\var(t)$ then $\theta=(1/2)\arctan(0)+\pi/2=\pi/2$ and $c=\xbar$.
The regression line is $x=0$.  Does this make sense?  Yes!  If the points $t_i$ are clustered
close together ($\var(t)$ small) and the lines are far apart ($A$ large) then the regression line
is horizontal.  From far away the data look like two fuzzy clusters at $(\pm A, \bar{t})$.
If these conditions are reversed then we get the vertical line $L$.  The data are spread out along
lines $L_+$ and $L_-$ and the line in between gets drawn.  But, the $Y$ minimization method always
draws the horizontal line $y=\bar{t}$ and the $X$ minimization method always draws the line $x=0$,
no matter the distribution of points on $L_+$ and $L_-$.  The transition case is $A^2=\var(t)$
and then $\var(x)=\var(y)$
and $\cov(x,y)=0$ so the $D$ method gives every line through the origin.

To see what happens when $L_\pm$ are not vertical let
$M$ and $B>0$ be real numbers.  Line $L_+$ is $y=Mx+B$ and line $L_-$ is $y=Mx-B$.

Because of rotation invariance in the $D$ method (Proposition~\ref{proptranslaterotate}(c)) 
we get the same effect here as with vertical lines. Notice in the above case that $A$ is the distance from each
line $L_{\pm}$ to the origin.  Using the formula at the beginning of Section~\ref{sectionregressionlines}
the distance from $L_\pm$ to the origin is $B/\sqrt{M^2+1}$.  The regression line is then $L$ if and only if
$B^2/(M^2+1)<\var(t)$.

For the other cases, note that a
vector parallel to $L_\pm$ is $(1,M)$.  Let $t\in\R^n$.  Define $(x_i,y_i)=(t_i,Mt_i+B)$ be
a point on $L_+$ for $1\leq i\leq n$.
For
$(x_{-i},y_{-i})$ to be the corresponding point on $L_-$ then,
the symmetry condition is $(x_{-i}-t_i,M(x_{-i}-t_i)-2B)\cdot(1,M)=0$.  This gives
$x_{-i}=t_i+2MB/(1+M^2)$ and $y_{-i}=Mt_i+(M^2-1)B/(M^2+1)$.  A calculation now yields
\begin{eqnarray*}
\xbar & = & \tbar+\frac{MB}{M^2+1}\\
\xxbar & = & \ttbar +\frac{2MB\tbar}{M^2+1}+\frac{2M^2B^2}{(M^2+1)^2}\\
\var(x) & = & \var(t)+\frac{M^2B^2}{(M^2+1)^2}\\
\ybar & = & M\tbar+\frac{M^2B}{M^2+1}\\
\yybar & = & M^2\ttbar+\frac{2M^3B\tbar}{M^2+1}+\frac{(M^4+1)B^2}{(M^2+1)^2}\\
\var(y) & = & M^2\var(t)+\frac{B^2}{(M^2+1)^2}\\
\xybar & = & M\ttbar +\frac{2M^2B\tbar}{M^2+1}+\frac{M(M^2-1)B^2}{(M^2+1)^2}\\
\cov(x,y) & = & M\var(t)-\frac{MB^2}{(M^2+1)^2}.
\end{eqnarray*}

For the $Y$ minimization method we now have 
$$
m=\frac{\cov(x,y)}{\var(x)}=\frac{M\var(t)-MB^2/(M^2+1)^2}{\var(t)+M^2B^2/(M^2+1)^2}
$$
and this equals $M$ if and only if $MB=0$.  But $B>0$ so the slope of the regression
line is $M$ if and only if $M=0$.  If $M=0$ we see that
$$
b=\frac{\ybar\var(x)-\xbar\cov(x,y)}{\var(x)}=\frac{0-0}{\var(t)}=0.
$$
The regression line is then the $x$ axis, similar to the case of vertical lines and the
$X$ method.

For the $X$ minimization method,
$$
\frac{1}{\mu}  =  \frac{\var(y)}{\cov(x,y)}=\frac{M^2\var(t)+B^2/(M^2+1)^2}{M\var(t)-MB^2/(M^2+1)^2}.
$$
This equals $M$ if and only if $B=0$.  Hence, the $X$ method never has a regression line with the 
same slope as the parallel lines.
\end{example}

Figure~\ref{figureparallel} shows this phenomenon.  The dotted line is the $Y$ method, the dashed
line is the $X$ method and the solid line is the $D$ method.
Notice the lines intersect in a point as in
Corollary~\ref{corollarymin}(b) and the slopes are arranged as in (f) of this corollary.
In the $D$ method the regression line
bisects the parallel lines of data points, while in the $Y$ method the slope is too small and in the
$X$ method the slope is too large.  
In this example, $M=2$ and $B=40$.  
\begin{figure}[h]
{\includegraphics
[scale=.3]
{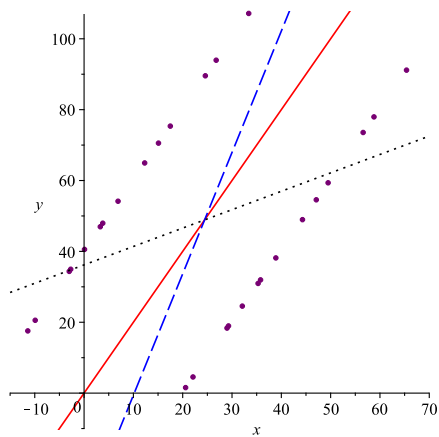}}
\caption{Symmetric data on parallel lines}
\label{figureparallel}
\end{figure}

\begin{example}\label{examplecircle}
Now consider all data points on a circle.  Due to translation invariance we can assume the
centre is the origin.  We will take it to be the unit circle.  It's not hard to show that
changing the radius doesn't change the results.  Let $(x_i,y_i)=(\cos(2\pi i/n),\sin(2\pi i/n))$
for $1\leq i\leq n$ and assume $n\geq 3$.  
This collection of points is symmetric with respect to reflections across
the $x$ axis (achieved by the transformation $i\mapsto n-i$).  There is symmetry across the $y$ axis 
if and only if $n$ is even.  Using arithmetic modulo $n$ this is achieved with the transformation
$i\mapsto n/2-i$.  At the end of the example these points are rotated arbitrary amounts, breaking
any $x$ and $y$ symmetry.

Calculate the quantities in Theorem~\ref{theoremthreeleast}.
The following formulas for sums of sines and cosines are well known:
$$
\sumin \sin(i\theta)=\frac{\sin[(n+1)\theta/2]\sin(n\theta/2)}{\sin(\theta/2)}
\qquad
\sumin \cos(i\theta)=\frac{\sin[(n+1/2)\theta]}{2\sin(\theta/2)}-\frac{1}{2}.
$$
We have
$$
\xbar  =  \frac{1}{n}\sumin\cos\left(\frac{2\pi i}{n}\right)
  =  \frac{\sin[2\pi(n+1/2)/n]}{2n\sin(\pi/n)}-\frac{1}{2n}=\frac{\sin(\pi/n)}{2n\sin(\pi/n)}-\frac{1}{2n}=0.
$$
And, using the cosine double angle formula,
\begin{eqnarray*}
\var(x) & = & \xxbar -0  =  \frac{1}{n}\sumin\cos^2\left(\frac{2\pi i}{n}\right) =\frac{1}{2n}\sumin\left[1+\cos\left(
\frac{4\pi i}{n}\right)\right]\\
 & = & \frac{1}{2}+\frac{1}{2n}\left(\frac{\sin[4\pi(n+1/2)/n]}{2\sin(2\pi/n)}-\frac{1}{2}\right)=\frac{1}{2}.
\end{eqnarray*}
Similarly,
$$
\ybar  =  \frac{1}{n}\sumin\sin\left(\frac{2\pi i}{n}\right)
  =  \frac{\sin[\pi(n+1)/n]\sin(\pi)}{n\sin(\pi/n)}=0.
$$
Then,
\begin{eqnarray*}
\var(y) & = & \yybar -0  =  \frac{1}{n}\sumin\sin^2\left(\frac{2\pi i}{n}\right) =\frac{1}{2n}\sumin\left[1-\cos\left(
\frac{4\pi i}{n}\right)\right]\\
 & = &\frac{1}{2}- \frac{1}{2n}\left(\frac{\sin[4\pi(n+1/2)/n]}{2\sin(2\pi/n)}-\frac{1}{2}\right)=\frac{1}{2}.
\end{eqnarray*}
And,
\begin{eqnarray*}
\cov(x,y) & = & \frac{1}{n}\sumin\cos\left(\frac{2\pi i}{n}\right)\sin\left(\frac{2\pi i}{n}\right)
=\frac{1}{2n}\sumin\sin\left(\frac{4\pi i}{n}\right)\\
 & = & \frac{\sin[2\pi(n+1)/n]\sin(2\pi)}{2n\sin(2\pi/n)}=0.
\end{eqnarray*}
Now we can find the regression lines using formulae from Theorem~\ref{theoremthreeleast}.  
For the $Y$ minimization method  we get $m=b=0$; the $x$ axis for each $n\geq 3$.
For the $X$ minimization method  we get $\mu=\beta=0$; the $y$ axis for each $n\geq 3$.
For the $D$ minimization method we have
$\xbar=\ybar=0$ so $c=0$. 
Since $\var(x)=\var(y)$ and $\cov(x,y)=0$ the angle $\theta$ is arbitrary.  Hence, we get every
line through the origin.  Somehow this seems more satisfying than breaking the symmetry by choosing a
coordinate axis for the regression line.

It is remarkable that if we take $0<\alpha<2\pi$ and use the points 
$(\cos(\alpha +2\pi i/n),\sin(\alpha+2\pi i/n))$ we get the same results.  Because of the
sine and cosine addition formulas, $\cos(\alpha +2\pi i/n)$ and $\sin(\alpha+2\pi i/n)$ are linear combinations
of $\cos(2\pi i/n)$ and $\sin(2\pi i/n)$ so we get the same values for $\xbar$, $\xxbar$, $\ybar$, $\yybar$ and
$\cov(x,y)$ as above.  Only if $\alpha$ is a multiple of $\pi/n$ is there now symmetry
across the $x$ or $y$ axes but the $Y$ method still gives a horizontal line, the $X$ method
still gives a vertical line and the $D$ method still gives every line through the origin.
\end{example}

\end{document}